\title{Non-existence of stable solutions for weighted $p$-Laplace equation}
\author{Kaushik Bal
        \and
        Prashanta Garain
        }
\newtheorem{The}{Theorem}[section]
\newtheorem{Rem}{Remark}[section]
\newtheorem{Def}{Definition}[section]
\newenvironment{AMS}{}{}
\newenvironment{keywords}{}{}
\begin{document}
\newpage
\maketitle
\begin{abstract}
We provide sufficient conditions on $w\in L^1_{loc}(\mathbb{R}^N)$ such that the weighted $p$-Laplace equation $$-\operatorname{div}\big(w(x)|\nabla u|^{p-2}\nabla u\big)=f(u)\;\;\mbox{in}\;\;\mathbb{R}^N$$ does not admit any stable $C^{1,\zeta}_{loc}$ solution in $\mathbb{R}^N$ where $f(x)$ is either $-x^{-\delta}$ or $e^x$ for any $0<\zeta<1$.
\end{abstract}

\begin{keywords}
\textbf{Keywords:} p-Laplacian; Non-existence; Stable solution
\end{keywords}

\begin{AMS}
\textbf{2010 Subject Classification} 35A01, 35B93, 35J92
\end{AMS}

\section{Introduction}
In this paper we are interested in finding conditions under which the quasilinear degenerate equations
given by $$-\Delta_{p,w} u=-u^{-\delta}\;\mbox{in}\;\;\mathbb R^N\;\;\mbox{and}\;\;-\Delta_{p,w} u=e^u\;\mbox{in}\;\;\mathbb R^N$$
doesn't admit a stable solution in $\mathbb{R}^N$.\\
Here $-\Delta_{p,w} u:=\operatorname{div}\big(w(x)|\nabla u|^{p-2}\nabla u\big)$ where $w:\mathbb{R}^N\to \mathbb{R}$ is a positive, measurable function satisfies one of the following conditions:
\begin{itemize}
\item $w\in C^1(\mathbb{R}^N)$ such that $w(x)\geq \eta$ for some constant $\eta>0$.
\item There exists $M>0$ such that $w(x)=w(|x|)=w(r)\leq M|r|^{\theta}$ for $\theta\in\mathbb{R}^N$ and $r=|x|$.
\end{itemize}  
The study of stable solutions for elliptic equation has been a subject of interest for the last three decades. When $p=2$ and $w\equiv 1$ the general problem of stable solutions of the equation given by $$-\Delta u=f(u)\;\;\mbox{in}\;\;\Omega\label{eo}$$ with $f$ locally lipschitz continuous in $\mathbb R$ has been a subject of considerable research. Obtaining $L^{\infty}$ estimates is one of the major concern in this case and we provide here a brief description of the available results.
\begin{itemize}
\item For the case $3\leq N\leq 9$, Cabr\'e and Capella \cite{Ca1} settled the problem for the unit ball.
\item a general smooth bounded domain the boundedness was derived for $N\leq 3$ and $N=4$ respectively by Nedev \cite{Ne} and Cabr\'e \cite{Ca} respectively
\item When $N\geq 10$ the existence of unbounded solutions was shown in Cabr\'e and Capella \cite{Ca1}.
\end{itemize}   
For more information on this field one can consult the survey of Cabr\'e \cite{Ca3}. In case of Laplacian in $\mathbb{R}^N$, classification results for $f(u)=|u|^{p-1}u$, $p>1$ or when $f(u)=e^u$ are already available see Farina \cite{Fa, Fa1}. In Farina \cite{Fa}, non-existence of stable solutions for $2\leq N\leq 9$ was obtained when $f(u)=e^u$. Other significant work in these topic can be found in \cite{JWei} and reference therein.
For the equation 
\begin{equation}\label{mp}
-\operatorname{div}\big(w(x)|\nabla u|^{p-2}\nabla u\big)=g(x)f(u)\;\;\mbox{in}\;\;\mathbb R^N
\end{equation}
 the following results were obtained recently
\begin{itemize}
\item When $w=g=1$, Le \cite{Ph} showed non-existence result for $p>2$ and $N<\frac{p(p+3)}{p-1}$ when $f(x)=e^x$.
\item When $w=g=1$, Guo-Mei \cite{GuoMei} showed non-existence for finite Morse index solutions for $2\leq p<N<\frac{p(p+3)}{p-1}$ when $f(x)=-x^{-\delta},\;\delta>q_c$ and $q_c$ is given by 
$$q_c=\frac{(p-1)[(1-p)N^2+(p^2+2p)N-p^2]-2p^2[(p-1)(N-1)]^{\frac{1}{2}}}{(N-p)[(p-1)N-p(p+3)]}$$
\item 
For $g\in L^1_{loc}(\mathbb{R}^N)$ with $g(x)\geq C|x|^a$ for $|x|$ large enough, Chen et al \cite{ChSoYa} showed the non-existence results for the case $f(x)=-x^{-\delta},\;\delta>q_c$ and
$2\leq p<N<\mu_0(p,a):=\frac{p(p+3)+4a}{p-1}$ and $$q_c=\frac{2(N+a)(p+a)-(N-p)[(p-1)(N+a)-p-a]-\alpha}{(N-p)[N(p-1)-p(p+3)]}$$ where $\alpha=2(p+a)\sqrt{(p+a)\big(N+a+\frac{N-p}{p-1}\big)}$.\\
For $f(x)=e^x$ they obtained that for $1\leq p<N<\frac{p(p+3)+4a}{p-1}$ the problem does not have a stable solution.
\end{itemize}

Our main aim in this note is to establish some non-existence results for stable solutions of the equation (\ref{mp}) provided $g(x)\equiv 1$. Before we begin with the main results let us define the notion of weak solution and stable solution for the problem (\ref{mp}).
\begin{Def}
We say that $u\in C^{1,\zeta}_{loc}(\mathbb{R}^N)$ is a weak solution to the equation $(\ref{mp})$ if 
for all $\varphi\in C_{c}^{1}(\mathbb{R}^N)$ we have,
\begin{equation}\label{ws1}
\int_{\mathbb{R}^N}w(x)|\nabla u|^{p-2} \nabla u\nabla \varphi dx-\int_{\mathbb{R}^N}f(u)\varphi dx=0.
\end{equation}
\end{Def}
\begin{Def}
A weak solution $u$ of equation $(\ref{mp})$ is said to be stable if for all $\varphi\in C_{c}^{1}(\mathbb R^N)$ we have,
\begin{multline}\label{ws}
\int_{\mathbf{R}^N}w(x)|\nabla u|^{p-2}|\nabla \varphi|^2 dx+(p-2)\int_{\mathbf{R}^N}w(x)|\nabla u|^{p-4}(\nabla u,\nabla \varphi)^2 dx\\-\int_{\mathbf{R}^N}f'(u)\varphi^{2}dx\geq 0.
\end{multline}
Therefore if $u$ is a stable solution of equation $(\ref{mp})$ then we have,
\begin{equation}\label{ss}
\int_{\mathbf{R}^N}f'(u)\varphi^{2}dx\leq(p-1)\int_{\mathbb{R}^N}w(x)|\nabla u|^{p-2}|\nabla \varphi|^2 dx.
\end{equation}
\end{Def}

\section{Main Results}
We start by denoting the equations as $(n)_e$ and $(n)_s$ for $f(x)=e^x$ and $f(x)=-x^{-\delta}$ respectively with $n=1,2,3,4$ and $$A_r=||w||^{\frac{p+3}{p-1}}_{L^{\frac{p+3}{p-1}}(B_{2r}(0))}$$ where $B_{r}(0)$ is the ball centered at $0$ with radius $r\geq 0$. We will also assume $C>0$ to be a constant for the rest of the paper which may vary depending on the situation.
\begin{The}\label{nonexistence theorem}
Let $p>2$ and $w\in C^1(\mathbb{R}^N)$ be such that $w(x)>C$. If for every ball  $B_R(0)$ we have, $A_R=\mathcal{O}(R^\mu)$ where $\mu<\frac{p(p+3)}{p-1}$, then there does not exist any stable solution of the problem $(\ref{mp})_e$ in $C^{1,\zeta}_{loc}(\mathbb{R}^N)$ for any $0<\zeta<1$. 
\end{The}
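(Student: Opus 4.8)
The plan is to adapt Farina's integral--estimate technique to the weighted quasilinear setting: from the stability inequality \eqref{ss} and the weak formulation \eqref{ws1}, tested against functions built out of $u$ and a cut--off, we extract a single estimate of the form $\int_{B_R(0)}e^{(1+2t)u}\,dx\le C\,R^{\gamma(t)}$, and then let $R\to\infty$ with $t$ chosen so that $\gamma(t)<0$. Throughout, fix $t\in\big(0,\tfrac{2}{p-1}\big)$ and let $\zeta\in C_c^\infty(\mathbb{R}^N)$ satisfy $0\le\zeta\le1$, $\zeta\equiv1$ on $B_R(0)$, $\operatorname{supp}\zeta\subset B_{2R}(0)$, $|\nabla\zeta|\le C/R$; we actually work with $\zeta^{2m}$ for an integer $m$ large enough that $2m\ge p(1+2t)$, which renders every negative power of $\zeta$ arising below harmless. (The hypotheses $w\in C^1$, $w>C$ serve here only to make the solution class $C^{1,\zeta}_{loc}$ and the test functions meaningful.)

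\emph{Step 1 (a Caccioppoli--type bound).} Insert $\varphi=e^{tu}\zeta^{m}$ into \eqref{ss} and expand $|\nabla\varphi|^2$; insert $\varphi=e^{2tu}\zeta^{2m}$ into \eqref{ws1} to solve for $\int_{\mathbb{R}^N}w|\nabla u|^{p}e^{2tu}\zeta^{2m}\,dx$ in terms of $\int_{\mathbb{R}^N}e^{(1+2t)u}\zeta^{2m}\,dx$ and a gradient cross term; then substitute the second identity into the first. Since $p>2$ and $1-\tfrac{(p-1)t}{2}>0$ on our range, after this substitution the principal term $\int e^{(1+2t)u}\zeta^{2m}$ carries a positive coefficient and one is left with two boundary integrals of the form $\int w|\nabla u|^{p-1}e^{2tu}\zeta^{2m-1}|\nabla\zeta|$ and $\int w|\nabla u|^{p-2}e^{2tu}\zeta^{2m-2}|\nabla\zeta|^2$. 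Young's inequality (exponents $\tfrac{p}{p-1},p$ and $\tfrac{p}{p-2},\tfrac{p}{2}$ respectively, splitting $w=w^{(p-1)/p}w^{1/p}$, etc.) bounds each of these by $\epsilon\int w|\nabla u|^{p}e^{2tu}\zeta^{2m}+C_\epsilon\int_{B_{2R}}w\,e^{2tu}\zeta^{2m-p}|\nabla\zeta|^{p}$; using \eqref{ws1} once more to re-absorb $\int w|\nabla u|^{p}e^{2tu}\zeta^{2m}$ and choosing $\epsilon$ small, one reaches
\begin{equation*}
\int_{\mathbb{R}^N}e^{(1+2t)u}\,\zeta^{2m}\,dx\;\le\;C\int_{B_{2R}(0)}w(x)\,e^{2tu}\,\zeta^{2m-p}\,|\nabla\zeta|^{p}\,dx .
\end{equation*}

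\emph{Step 2 (H\"older and the growth of $A_R$).} Apply H\"older to the right--hand side with three exponents $a=\tfrac{1+2t}{2t}$, $b=\tfrac{p+3}{p-1}$ and $c$ defined by $\tfrac1a+\tfrac1b+\tfrac1c=1$; a short computation gives $\tfrac1c=\tfrac{2\,(2-t(p-1))}{(1+2t)(p+3)}$, which is positive exactly because $t<\tfrac2{p-1}$. This produces the factor $\big(\int e^{(1+2t)u}\zeta^{2m}\big)^{1/a}$, to be absorbed into the left--hand side, the factor $\big(\int_{B_{2R}}w^{(p+3)/(p-1)}\big)^{1/b}=A_R^{1/b}$, and $\big(\int_{B_{2R}}|\nabla\zeta|^{pc}\big)^{1/c}\le C R^{(N-pc)/c}$. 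Collecting exponents and invoking $A_R=\mathcal{O}(R^{\mu})$ yields
\begin{equation*}
\int_{B_R(0)}e^{(1+2t)u}\,dx\;\le\;C\,R^{(1+2t)\left(\frac{\mu(p-1)}{p+3}+\frac{N}{c}-p\right)} .
\end{equation*}
As $t\uparrow\tfrac2{p-1}$ we have $\tfrac1c\downarrow0$, so the bracket tends to $\tfrac{\mu(p-1)}{p+3}-p$, which is strictly negative precisely under the hypothesis $\mu<\tfrac{p(p+3)}{p-1}$. Hence we may fix $t<\tfrac2{p-1}$ making the exponent of $R$ negative; since $R\mapsto\int_{B_R}e^{(1+2t)u}$ is nondecreasing, letting $R\to\infty$ forces $\int_{\mathbb{R}^N}e^{(1+2t)u}\,dx=0$, contradicting $e^{(1+2t)u}>0$.

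\emph{Main obstacle.} I expect the delicate part to be Step 1 --- the self-referential use of \eqref{ws1} to show that the gradient energies $\int w|\nabla u|^{p}e^{2tu}\zeta^{2m}$ genuinely re-absorb (equivalently, solving the resulting $2\times2$ system of inequalities) with all constants uniform in $R$, together with fixing the cut-off exponent $m$ so that no negative power of $\zeta$ survives. Step 2 is then a routine, if bookkeeping-heavy, H\"older computation. One should also record at the outset that $e^{tu}\zeta^{m}$ and $e^{2tu}\zeta^{2m}$ are admissible $C^1_c$ test functions (here $u\in C^{1,\zeta}_{loc}$ and $p>2$ keep $|\nabla u|^{p-2}$ continuous) and that every integral above is finite, both being immediate from the regularity of $u$ and $w$.
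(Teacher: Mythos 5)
Your proposal is correct, and its engine is the same as the paper's: you combine the weak formulation and the stability inequality with exponential test functions ($e^{tu}$, $e^{2tu}$ times powers of a cut-off; the paper's $\varphi=e^{p\alpha u}\psi^p$ and $e^{\frac{p\alpha u}{2}}\psi^{\frac p2}$ correspond to your $2t=p\alpha$, and your admissible range $t<\frac{2}{p-1}$ is exactly the paper's $\alpha<\frac{4}{p(p-1)}$), absorb the gradient energy, and arrive at the same Caccioppoli-type inequality $\int e^{(1+2t)u}\zeta^{2m}\le C\int w\,e^{2tu}\zeta^{2m-p}|\nabla\zeta|^p$. The genuine difference is the endgame. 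The paper applies Young's inequality with exponents $\frac{p\alpha+1}{p\alpha}$ and $p\alpha+1$ to absorb $e^{p\alpha u}$, obtaining $\int_{B_R}e^{(p\alpha+1)u}\le CR^{-p(p\alpha+1)}\int_{B_{2R}}w^{p\alpha+1}$, and then passes from $\int_{B_{2R}}w^{p\alpha+1}$ to the quantity $A_R$; since $p\alpha+1<\frac{p+3}{p-1}$, this passage implicitly uses the lower bound $w(x)>C$ (or would otherwise pick up an untracked volume factor from H\"older), and the exponent displayed in the paper is then negative for every fixed $\alpha$ in the admissible range. You instead apply a three-exponent H\"older inequality with $b=\frac{p+3}{p-1}$ chosen to match the definition of $A_R$ directly, which costs you the explicit extra factor $R^{N/c}$ with $\frac1c=\frac{2(2-t(p-1))}{(1+2t)(p+3)}$, and you remove it by pushing $t$ toward $\frac{2}{p-1}$; your exponent arithmetic here checks out. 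What your route buys is that the hypothesis $w>C$ is never used in the integral estimate (only $w\in L^{\frac{p+3}{p-1}}_{loc}$ with the stated growth of $A_R$), and the volume bookkeeping is fully explicit; what it costs is that the conclusion requires $t$ (equivalently $\alpha$) near the top of its range rather than an arbitrary admissible value. Your Step 1, including the re-absorption of the gradient energy via the weak formulation with all constants independent of $R$, is exactly the paper's Steps 1--2 and closes in the same way, so the point you flag as the main obstacle is handled by the scheme you describe.
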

\begin{The}\label{radial}
Let $p>2$ and $w\in L^{\frac{p+3}{p-1}}_{loc}(\mathbb{R}^N)$ be such that $w(x)=w(|x|)\leq C|x|^{\theta}$, then the problem $(\ref{mp})_e$  does not admit a stable solution in $C^{1,\zeta}_{loc}(\mathbb{R}^N\setminus \{0\})$ for any $0<\zeta<1$ provided $\theta\in\cup_{\alpha}\mathcal{I}_{\alpha}$, where $\mathcal{I}_{\alpha}=(-\frac{N}{p\alpha+1},p-\frac{N}{p\alpha+1})$ for $\alpha\in (0,\frac{4}{p(p-1)})$. 
Moreover
\begin{enumerate}[label=\roman*)]
\item If $N<\frac{p(p+3)}{p-1}$, there exists $\alpha_0\in (0,\frac{4}{p(p-1)})$ such that 
$0\in\mathcal{I}_{\alpha_0}$.
\item If $N\geq\frac{p(p+3)}{p-1}$, then $\cup_{\alpha}\mathcal{I}_{\alpha}\subset (-\infty,0).$
\end{enumerate}
\end{The}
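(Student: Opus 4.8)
The plan is to adapt the Farina--Moser test--function technique to the weighted $p$-Laplacian. Insert a one--parameter family of exponential cut--offs into the stability inequality $(\ref{ss})$, use the weak formulation $(\ref{ws1})$ to eliminate the top--order term $\int w|\nabla u|^p e^{p\alpha u}\eta^{2k}$, absorb the remainder by Young's inequality, and thereby derive a Caccioppoli--type estimate whose right--hand side is a strictly negative power of the radius precisely when $\theta\in\mathcal I_\alpha$; letting the radius tend to infinity then forces $\int_{\mathbb R^N}e^{(p\alpha+1)u}\,dx=0$, which is impossible because $e^{(p\alpha+1)u}>0$ everywhere.

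Concretely, assume $u\in C^{1,\zeta}_{loc}(\mathbb R^N\setminus\{0\})$ is a stable solution. Fix $\alpha\in(0,\tfrac{4}{p(p-1)})$ with $\theta\in\mathcal I_\alpha$, set $\beta=\tfrac{p\alpha}{2}$ (so $2\beta+1=p\alpha+1$ and $\beta<\tfrac{2}{p-1}$), and let $\eta$ be a cut--off equal to $1$ on $B_R$, supported in $B_{2R}$, with $|\nabla\eta|\le C/R$ (and, because of the puncture, also vanishing near $0$ with a controlled transition layer). With $\varphi=e^{\beta u}\eta^k$, $k$ large, plugged into $(\ref{ss})$ together with $f'(u)=e^u$ --- using $p>2$ to discard the $(p-2)$--term, as in the passage from $(\ref{ws})$ to $(\ref{ss})$ --- one obtains an inequality containing $\int w|\nabla u|^p e^{2\beta u}\eta^{2k}$; testing $(\ref{ws1})$ with $\psi=e^{2\beta u}\eta^{2k}$ gives the identity
\[
2\beta\int w|\nabla u|^p e^{2\beta u}\eta^{2k}\,dx=\int e^{(p\alpha+1)u}\eta^{2k}\,dx-2k\int w|\nabla u|^{p-2}e^{2\beta u}\eta^{2k-1}\nabla u\cdot\nabla\eta\,dx.
\]
Substituting this back, the coefficient of $\int e^{(p\alpha+1)u}\eta^{2k}$ on the left becomes $1-\tfrac{(p-1)\beta}{2}$, which is positive exactly because $\beta<\tfrac{2}{p-1}$, i.e. $\alpha<\tfrac{4}{p(p-1)}$; the remaining cross terms and $|\nabla\eta|$--terms are controlled by Young's inequality (choosing $k$ large enough that every surviving power of $\eta$ is nonnegative), leading to
\[
\int_{\mathbb R^N}e^{(p\alpha+1)u}\eta^{2k}\,dx\le C\int_{\mathbb R^N}w\,e^{p\alpha u}|\nabla\eta|^p\eta^{2k-p}\,dx.
\]

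The last step is to estimate the right--hand side by Hölder's inequality with conjugate exponents $\tfrac{p\alpha+1}{p\alpha}$ and $p\alpha+1$, keeping $\eta^{2k}$ in the first factor; this reproduces $\bigl(\int e^{(p\alpha+1)u}\eta^{2k}\bigr)^{\frac{p\alpha}{p\alpha+1}}$ times $\bigl(\int w^{p\alpha+1}|\nabla\eta|^{p(p\alpha+1)}\eta^{\cdots}\bigr)^{\frac{1}{p\alpha+1}}$. On the support of $\nabla\eta$ we use $w(x)\le C|x|^\theta$: the lower bound $\theta>-\tfrac{N}{p\alpha+1}$ is exactly what keeps $w^{p\alpha+1}$ locally integrable, and $\int_{B_{2R}}w^{p\alpha+1}\le CR^{\,N+\theta(p\alpha+1)}$, so that after absorbing the reproduced power of $\int e^{(p\alpha+1)u}\eta^{2k}$ (finite by compact support) one arrives at $\int_{B_R}e^{(p\alpha+1)u}\,dx\le C\,R^{\,N+(p\alpha+1)(\theta-p)}$. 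Since $\theta<p-\tfrac{N}{p\alpha+1}$, the exponent is negative, and letting $R\to\infty$ yields $\int_{\mathbb R^N}e^{(p\alpha+1)u}\,dx=0$, a contradiction. Parts (i)--(ii) are elementary: as $-\tfrac{N}{p\alpha+1}<0$ always, $0\in\mathcal I_\alpha$ iff $p\alpha+1>\tfrac{N}{p}$, i.e. $\alpha>\tfrac{N-p}{p^2}$; since $p\alpha+1\uparrow\tfrac{p+3}{p-1}$ as $\alpha\uparrow\tfrac{4}{p(p-1)}$, such an $\alpha_0$ exists in $(0,\tfrac{4}{p(p-1)})$ iff $\tfrac{N-p}{p^2}<\tfrac{4}{p(p-1)}$, which rearranges to $N<\tfrac{p(p+3)}{p-1}$, giving (i); and when $N\ge\tfrac{p(p+3)}{p-1}$ the reverse inequality makes $p-\tfrac{N}{p\alpha+1}<0$ for every admissible $\alpha$, so $\mathcal I_\alpha\subset(-\infty,0)$, giving (ii).

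I expect the absorption step to be the main obstacle: one must keep precise track of the powers of $\eta$ so that each error term genuinely carries a full factor $|\nabla\eta|^p$ and no uncontrolled copy of $\int w|\nabla u|^p e^{p\alpha u}\eta^{2k}$ is left over, and check that the whole scheme closes exactly under $\beta<\tfrac{2}{p-1}$. A secondary difficulty is the behaviour at the origin: since $u$ is only assumed $C^{1,\zeta}$ in $\mathbb R^N\setminus\{0\}$, the cut--off must also vanish near $0$, and one must use the left endpoint of $\mathcal I_\alpha$ to guarantee that the inner transition layer contributes negligibly in the limit.
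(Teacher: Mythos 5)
Your proposal is, in its main line, the same argument as the paper's: exponential test functions of the form $e^{\beta u}\times(\text{cutoff power})$ in the stability inequality (\ref{ss}) combined with the weak formulation (\ref{ws1}) to control the top-order term $\int w|\nabla u|^pe^{p\alpha u}(\cdot)$, closure of the absorption exactly under $\alpha<\frac{4}{p(p-1)}$ (your coefficient $1-\frac{(p-1)\beta}{2}$ with $\beta=\frac{p\alpha}{2}$ is the paper's $1-\frac{\alpha p(p-1)}{4}$), then H\"older/Young with exponents $\frac{p\alpha+1}{p\alpha}$ and $p\alpha+1$ to reach $\int_{B_R}e^{(p\alpha+1)u}\,dx\le CR^{-p(p\alpha+1)}\int_{B_{2R}}w^{p\alpha+1}\,dx$, followed by the radial power counting $\int_{B_{2R}}w^{p\alpha+1}\le CR^{N+\theta(p\alpha+1)}$ and the elementary endpoint analysis for (i)--(ii), all of which matches the paper (the paper uses the Step 1 inequality rather than your exact identity, and Young with $\psi=\eta^{p\alpha+1}$ rather than H\"older, but these are cosmetic differences).

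One caution on the point you flag as a secondary difficulty: your proposed treatment of the puncture (an inner cutoff vanishing near $0$, with the left endpoint of $\mathcal I_\alpha$ making the transition layer negligible) would not close. An inner layer at scale $\epsilon$ contributes a term of order $\epsilon^{-p(p\alpha+1)}\int_{B_{2\epsilon}}w^{p\alpha+1}\le C\epsilon^{N+(\theta-p)(p\alpha+1)}$, and the exponent $N+(\theta-p)(p\alpha+1)$ is \emph{negative} precisely under the condition $\theta<p-\frac{N}{p\alpha+1}$ that you need at infinity, so this term blows up as $\epsilon\to0$ and, for fixed $\epsilon$, yields only a finite bound on $\int_{\mathbb R^N\setminus B_{2\epsilon}}e^{(p\alpha+1)u}$ rather than the contradiction $\int e^{(p\alpha+1)u}=0$. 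The paper does not excise the origin: it uses cutoffs equal to $1$ on $B_R(0)$, and the role of the lower endpoint $\theta>-\frac{N}{p\alpha+1}$ is solely to make $\int_0^{2R}r^{\theta(p\alpha+1)+N-1}\,dr$ (i.e.\ $\int_{B_{2R}}w^{p\alpha+1}$) finite; so you should run the argument with test functions that do not vanish at the origin, exactly as in Theorem \ref{nonexistence theorem}.
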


\begin{Rem} For $w(x)\equiv 1$ the equation $(\ref{mp})_e$ becomes the following equation 
\begin{equation}\label{plap1}
-\Delta_p u=e^u\;\mbox{in}\;\;\mathbb R^N
\end{equation}
and Theorem \ref{radial} says that there does not exists any stable solution to the equation (\ref{plap1}) provided one has $N<\frac{p(p+3)}{p-1}$, $p>2$. 
\end{Rem}

For $\delta>0$ denote, $$k=\frac{\delta+p-1+2l}{\delta+p-1}\;\mbox{where}\;l=\frac{2\delta}{p-1}-\frac{p-1}{2} \;\mbox{and}\;Q_r=||w||^k_{L^k(B_{2r}(0))}$$ Also $$\mathcal{B}:=(0,l)$$

Note that for $\delta>0$ and $p>2$ we have $k>1$.
\begin{The}\label{Main}
Suppose $w\in C^1(\mathbb{R}^N)$ be such that $w(x)>C$ and for every ball  $B_{2R}(0)$ we have, $Q_R=\mathcal{O}(R^\tau)$. If $0<\tau<pk$ with $p>2$ and $\delta\geq{max\{1,\frac{(p-1)^2}{4}}\}$ then there is no stable, positive solution of equation $(\ref{mp})_s$ in $C^{1,\zeta}_{loc}(\mathbb{R}^N)$
for any $0<\zeta<1$.
\end{The}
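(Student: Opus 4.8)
The plan is to run a Farina-type argument: combine the stability inequality with the weak formulation tested against a negative power of $u$, reduce all error terms to a single model integral containing $\nabla\psi$, and then let a cut-off $\psi$ escape to infinity. Since $f(x)=-x^{-\delta}$ we have $f'(u)=\delta u^{-\delta-1}$, so $(\ref{ss})$ reads
$$\delta\int_{\mathbb R^N}u^{-\delta-1}\varphi^{2}\,dx\;\le\;(p-1)\int_{\mathbb R^N}w(x)|\nabla u|^{p-2}|\nabla\varphi|^{2}\,dx .$$
As $u\in C^{1,\zeta}_{loc}(\mathbb R^N)$ is positive, for $\psi\in C_c^1(\mathbb R^N)$ with $0\le\psi\le1$ and parameters $t>0$, $m\ge1$ the functions $u^{-t}\psi^{m}$ and $u^{-(2t+1)}\psi^{2m}$ lie in $C^1_c(\mathbb R^N)$ and are thus admissible. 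I would first insert $\varphi=u^{-t}\psi^{m}$ into the displayed inequality (expanding $|\nabla(u^{-t}\psi^{m})|^{2}$) and $\varphi=u^{-(2t+1)}\psi^{2m}$ into $(\ref{ws1})$, getting, with $J:=\int w|\nabla u|^{p}u^{-2t-2}\psi^{2m}$ and $I:=\int u^{-\delta-2t-1}\psi^{2m}$, a stability inequality of the form $\delta I\le(p-1)t^{2}J+(p-1)(K+G)$ together with the identity $(2t+1)J=I+2mK$, where $K=\int w|\nabla u|^{p-2}u^{-2t-1}\psi^{2m-1}(\nabla u,\nabla\psi)$ and $G=\int w|\nabla u|^{p-2}u^{-2t}\psi^{2m-2}|\nabla\psi|^{2}$.

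Next I would eliminate $J$ between these relations; the $I$-terms collect on the left with coefficient $\delta-\tfrac{(p-1)t^{2}}{2t+1}$. The decisive choice is $t=\tfrac{2\delta}{p-1}-\tfrac12$: then $2t+1=\tfrac{4\delta}{p-1}$, one checks $\delta-\tfrac{(p-1)t^{2}}{2t+1}$ is a positive multiple of $8\delta-(p-1)$, hence strictly positive under the hypothesis on $\delta$, and $t-\tfrac{p-2}{2}=l>0$, i.e. $\mathcal B\neq\emptyset$, which is exactly what the Hölder step below requires. Bounding $K$ by Young's inequality with exponents $\tfrac p{p-1},p$ and $G$ by Young's inequality with exponents $\tfrac p{p-2},\tfrac p2$, in each case there appear an $\varepsilon$-multiple of $J$ and a term of the single model type $\int w\,u^{\,p-2t-2}\psi^{2m-p}|\nabla\psi|^{p}$; and using $(2t+1)J=I+2mK$ once more (with Young) bounds $J$ by $CI+C\int w\,u^{\,p-2t-2}\psi^{2m-p}|\nabla\psi|^{p}$. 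Since $I$ and $J$ are finite a priori ($\psi$ has compact support, $u$ is locally bounded away from $0$ and $\infty$, and $w\in L^1_{loc}$), taking $\varepsilon$ small enough absorbs all $I$-terms and yields
$$\int_{\mathbb R^N}u^{-\delta-2t-1}\psi^{2m}\,dx\;\le\;C\int_{\mathbb R^N}w(x)\,u^{\,p-2t-2}\,\psi^{2m-p}\,|\nabla\psi|^{p}\,dx .$$

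For this $t$, $\delta+2t+1=\tfrac{\delta(p+3)}{p-1}$ and $p-2t-2=\tfrac{(p-1)^{2}-4\delta}{p-1}<0$, so $q:=\tfrac{\delta+2t+1}{2t+2-p}>1$ and its conjugate $q'=\tfrac{\delta+2t+1}{\delta+p-1}=k$. Hölder's inequality with exponents $q$ and $k$ on the right-hand side, followed by absorbing $\big(\int u^{-\delta-2t-1}\psi^{2m}\big)^{1/q}$ (legitimate since $1-\tfrac1q=\tfrac1k$ and by the a priori finiteness), gives $\int u^{-\delta-2t-1}\psi^{2m}\le C\int w^{k}\psi^{\,2m-pk}|\nabla\psi|^{pk}$. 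Taking $m\ge pk/2$ and $\psi$ the standard cut-off with $\psi\equiv1$ on $B_R(0)$, $\operatorname{supp}\psi\subset B_{2R}(0)$, $|\nabla\psi|\le C/R$, I would conclude
$$\int_{B_R(0)}u^{-\frac{\delta(p+3)}{p-1}}\,dx\;\le\;\frac{C}{R^{pk}}\int_{B_{2R}(0)}w^{k}\,dx\;=\;\frac{C\,Q_R}{R^{pk}}\;=\;\mathcal O\!\big(R^{\,\tau-pk}\big).$$
As $\tau<pk$, the right side tends to $0$ as $R\to\infty$, so by monotone convergence $\int_{\mathbb R^N}u^{-\delta(p+3)/(p-1)}\,dx=0$, contradicting $u>0$; hence $(\ref{mp})_s$ has no stable positive $C^{1,\zeta}_{loc}$ solution.

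The step I expect to be the main obstacle is the bookkeeping in the middle paragraph: the elimination of $J$ and the two Young splittings must be organised so that, simultaneously, (i) the coefficient $\delta-\tfrac{(p-1)t^{2}}{2t+1}$ of the surviving term $I$ stays strictly positive — which is precisely where $\delta\ge\max\{1,(p-1)^{2}/4\}$ is used — and (ii) every error term collapses to the \emph{same} model integral $\int w\,u^{\,p-2t-2}\psi^{2m-p}|\nabla\psi|^{p}$, whose exponent of $u$ is tuned so that the closing Hölder step reproduces exactly $\|w\|^{k}_{L^{k}(B_{2R})}=Q_R$. One must also carefully justify the a priori finiteness of $I$ and $J$ before absorbing (this rests only on $u\in C^{1,\zeta}_{loc}$, $u>0$, $w\in L^1_{loc}$), and note that it is the regularity framework attached to the hypotheses $w\in C^1(\mathbb R^N)$, $w>C$, that makes the test functions $u^{-t}\psi^{m}$ and $u^{-(2t+1)}\psi^{2m}$ legitimate.
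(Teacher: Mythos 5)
Your proposal is correct and follows essentially the same route as the paper's proof: you test the weak formulation with $u^{-(2t+1)}$ times a cutoff power and the stability inequality with $u^{-t}$ times a cutoff power, eliminate the gradient integral, absorb the error terms by Young's inequality, and close with a Caccioppoli-type estimate, the cutoff $\psi_R$, and the growth hypothesis $Q_R=\mathcal{O}(R^\tau)$ with $\tau<pk$, exactly as the paper does with its test functions $u^{-\alpha}\psi^{p}$ and $u^{-\beta-\frac{p}{2}+1}\psi^{\frac{p}{2}}$, $\alpha=2\beta+p-1$. Your fixed choice $t=\frac{2\delta}{p-1}-\frac{1}{2}$ corresponds to the endpoint $\beta=l$ of the paper's parameter range $\mathcal{B}=(0,l)$, which makes the exponent of $w$ equal to $k$ on the nose (so the final H\"older/absorption step hits $Q_R$ directly, in place of the paper's Young step after replacing $\psi$ by $\psi^{\frac{2\beta+\delta+p-1}{p}}$); this is a cosmetic, if slightly cleaner, variant rather than a genuinely different argument.
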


\begin{The}\label{rad}
Let $p>2$ and $w\in L^k_{loc}(\mathbb{R}^N)$ be such that $w(x)=w(|x|)\leq C |x|^{\theta}$. If $\theta\in \cup_{\beta}\mathcal{K}_{\beta}$ where $\mathcal{K}_{\beta}=(-\frac{N(\delta+p-1)}{2\beta+\delta+p-1},p-\frac{N(\delta+p-1)}{2\beta+\delta+p-1})$ for $\beta\in \mathcal{B}$ then there does not exist any positive, stable solution of the problem $(\ref{mp})_s$ in $C^{1,\zeta}_{loc}(\mathbb{R}^N\setminus \{0\})$ for any $0<\zeta<1$.
Moreover,
\begin{enumerate}[label=\roman*)]
\item If $N<kp$ then there exist $\beta_0\in \mathcal{B}$ such that $0\in \mathcal{K}_{\beta_0}$
\item If $N\geq kp$ then $\cup_{\beta\in\mathcal{B}}\mathcal{K}_{\beta}\subset (-\infty,0)$
\end{enumerate}
\end{The}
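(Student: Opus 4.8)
The plan is to carry over to the radial weight the Farina-type argument behind Theorem \ref{Main}, the essential new difficulty being that the admissible test functions must now be supported in $\mathbb{R}^N\setminus\{0\}$: here $w$ is no longer bounded below and $u$ need not even be defined at the origin, so the cut-off argument has to be organised on expanding annuli rather than on balls. Assume, for contradiction, that $u\in C^{1,\zeta}_{loc}(\mathbb{R}^N\setminus\{0\})$ is a positive stable solution of $(\ref{mp})_s$; since $\theta\in\cup_{\beta\in\mathcal B}\mathcal K_\beta$, fix once and for all a parameter $\beta_0\in\mathcal B=(0,l)$ with
$$-\frac{N(\delta+p-1)}{2\beta_0+\delta+p-1}<\theta<p-\frac{N(\delta+p-1)}{2\beta_0+\delta+p-1}.$$

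The first step is to combine two identities: the weak formulation $(\ref{ws1})$ tested against a function of the form $u^{-\gamma}\eta^{p}$, and the stability inequality $(\ref{ss})$ (here $f'(u)=\delta u^{-\delta-1}>0$) tested against a function of the form $u^{-\beta}\eta^{p/2}$, with $\gamma$ tied to $\beta$ so that the singular zeroth-order terms $\int u^{-\delta-\gamma}\eta^{p}$ appearing on both sides cancel. It is precisely here that the hypotheses $p>2$, $\delta\ge\max\{1,(p-1)^2/4\}$ (which forces $l>0$, hence $\mathcal B\neq\emptyset$) and $\beta_0<l$ are used, to keep the coefficients surviving the cancellation strictly positive. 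Discarding the favourable squared-gradient contributions and absorbing the gradient cross-terms by Young's inequality, one is led to a Caccioppoli-type estimate
$$\int_{\mathbb{R}^N\setminus\{0\}} w\,|\nabla u|^{p}\,u^{-\gamma-1}\,\eta^{p}\,dx\ \le\ C\int_{\mathbb{R}^N\setminus\{0\}} w\,|\nabla\eta|^{p}\,u^{\,p-1-\gamma}\,dx .$$

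Next I would estimate the right-hand side by two Hölder inequalities — one which, using the equation once more, trades the factor $u^{\,p-1-\gamma}$ for a power of the left-hand integrand, and one with conjugate exponents $k$ and $k/(k-1)$ (recall $k=\frac{\delta+p-1+2l}{\delta+p-1}>1$) which peels off $\|w\|_{L^{k}(\operatorname{supp}\nabla\eta)}^{k}$. Taking $\eta=\eta_R$ equal to $1$ on $B_R(0)\setminus B_{1/R}(0)$ and supported in $B_{2R}(0)\setminus B_{1/(2R)}(0)$, the outer-annulus part of $\|w\|^k_{L^k}$ is, thanks to $w\le C|x|^{\theta}$, of size $Q_R=\mathcal O(R^{N+\theta k})$, so that piece is $\mathcal O\big(R^{N+\theta k-pk}\big)$, an exponent which is negative precisely because $\theta<p-\tfrac Nk$ — a consequence of the upper bound on $\theta$ fixed above; the inner-annulus part, where only $w\in L^{k}_{loc}(\mathbb{R}^N)$ and the local $C^{1,\zeta}$-regularity of $u$ are at hand, is dealt with by a two-stage Farina argument (first one shows the critical integral is finite on $\mathbb{R}^N\setminus\overline{B_1}$, then a refined choice of annular cut-off, using that finiteness, forces it to vanish), and it is in keeping the intermediate weighted integrals convergent that the lower bound on $\theta$ enters. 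Passing to the limit $R\to\infty$ gives
$$\int_{\mathbb{R}^N\setminus\{0\}} w\,|\nabla u|^{p}\,u^{-\gamma-1}\,dx=0 ,$$
hence $\nabla u\equiv0$ on $\mathbb{R}^N\setminus\{0\}$ (because $w>0$ a.e.), so $u$ is constant there; but a constant solves $-\operatorname{div}(w|\nabla u|^{p-2}\nabla u)=0$, not $-u^{-\delta}<0$ — the desired contradiction.

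Finally, parts i) and ii) are a short computation. Writing $s(\beta)=\frac{N(\delta+p-1)}{2\beta+\delta+p-1}$ one has $\mathcal K_\beta=\big(-s(\beta),\,p-s(\beta)\big)$, so $0\in\mathcal K_\beta$ if and only if $s(\beta)<p$, i.e.\ $2\beta>(\delta+p-1)\tfrac{N-p}{p}$. As $\beta$ ranges over $(0,l)$ the value $s(\beta)$ decreases from $N$ towards $\tfrac Nk$ — using $2l+\delta+p-1=\tfrac{\delta(p+3)}{p-1}$, so that indeed $s(l)=\tfrac Nk$ — hence a suitable $\beta_0$ exists exactly when $(\delta+p-1)\tfrac{N-p}{p}<2l$, which rearranges to $N<p\cdot\tfrac{2l+\delta+p-1}{\delta+p-1}=pk$; this proves i). Conversely, if $N\ge pk$ then $2\beta<2l\le(\delta+p-1)\tfrac{N-p}{p}$ for every $\beta\in(0,l)$, so $s(\beta)>p$ and the upper endpoint $p-s(\beta)$ of $\mathcal K_\beta$ is negative, i.e.\ $\mathcal K_\beta\subset(-\infty,0)$; this proves ii). The main obstacle throughout is the exponent bookkeeping in the absorption-and-double-Hölder step combined with the treatment of the origin: since $w$ is not bounded below near $0$, Theorem \ref{Main} cannot be applied, and the annular cut-off together with the two-stage limiting procedure is what compensates for that.
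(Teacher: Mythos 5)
Your parts i) and ii) are correct and essentially identical to the paper's computation (monotonicity of $s(\beta)=\frac{N(\delta+p-1)}{2\beta+\delta+p-1}$ on $(0,l)$ with $s(l)=N/k$), and your derivation of the Caccioppoli-type inequality follows the same two test-function steps as the paper. The divergence, and the gap, is in what you present as the main new point: the treatment of the origin by annular cut-offs and a ``two-stage Farina argument''. This step is only sketched, and as sketched it does not close. If you keep the inner radius of the annulus fixed, the inner cut-off region contributes a fixed positive constant to the right-hand side of your Caccioppoli estimate, so the limit $R\to\infty$ only yields \emph{finiteness} of $\int w|\nabla u|^p u^{-\gamma-1}$ (or of $\int u^{-(2\beta+\delta+p-1)}$) on $\mathbb{R}^N\setminus B_1$, never the vanishing you need to conclude $\nabla u\equiv 0$. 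If instead you shrink the inner radius $\rho\to 0$, the inner contribution scales like $\rho^{-\frac{p(2\beta+\delta+p-1)}{\delta+p-1}}\int_0^{2\rho} r^{\frac{\theta(2\beta+\delta+p-1)}{\delta+p-1}+N-1}dr\sim \rho^{\frac{(\theta-p)(2\beta+\delta+p-1)}{\delta+p-1}+N}$, and this exponent is \emph{negative} precisely under the hypothesis $\theta<p-\frac{N(\delta+p-1)}{2\beta+\delta+p-1}$, so the inner term blows up; compensating it would require quantitative control of $u^{-1}$ near the origin, which is not available (the solution is only assumed positive and $C^{1,\zeta}_{loc}$ away from $0$, and nothing prevents $u\to 0$ there). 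So neither variant of your limiting procedure produces the contradiction, and the lower endpoint of $\mathcal{K}_\beta$ cannot play the role you assign to it.

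The paper avoids this entirely: its definitions of weak solution and stability are stated for test functions in $C^1_c(\mathbb{R}^N)$, so it keeps the ball cut-offs $\psi_R$ of Theorem \ref{Main}, arrives at (\ref{Final}), and then simply inserts $w\le C|x|^\theta$; the lower endpoint of $\mathcal{K}_\beta$ is used only to make $\int_0^{2R} r^{\frac{\theta(2\beta+\delta+p-1)}{\delta+p-1}+N-1}dr$ converge at $r=0$, the upper endpoint makes the total power of $R$ in (\ref{imp}) negative, and the contradiction is that $\int_{\mathbb{R}^N}u^{-(2\beta+\delta+p-1)}dx=0$ is impossible for a positive $u$ (no passage through constancy of $u$ is needed). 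A further, more minor, slip in your write-up: the Hölder step you propose with conjugate exponents $k$ and $k/(k-1)$ does not match the $u$-power in the integrand; the exponent pair that lines up with $u^{-2\beta}$ and produces the critical power $2\beta+\delta+p-1$ is $\frac{2\beta+\delta+p-1}{\delta+p-1}$ and $\frac{2\beta+\delta+p-1}{2\beta}$, with $k$ appearing only as the limiting value $\beta\to l$ — for this radial theorem no Hölder on $w$ is needed at all, since the pointwise bound $w\le C|x|^\theta$ can be used directly as in (\ref{imp}).
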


\begin{Rem} For $w(x)\equiv 1$ the equation $(\ref{mp})_s$ becomes the following equation 
\begin{equation}\label{plap}
-\Delta_p u+u^{-\delta}=0\;\mbox{in}\;\;\mathbb R^N
\end{equation}
and Theorem \ref{rad} says that there does not exists any positive, stable solution to the equation (\ref{plap}) provided one has $N<kp$, $p>2$ and $\delta\geq{max\{1,\frac{(p-1)^2}{4}}\}.$ 
\end{Rem}

\section{Proof of Main Theorems}
\begin{proof}[Proof of Theorem \ref{nonexistence theorem}]
Suppose $u \in C^{1,\zeta}(\mathbb R^N)$, $0<\zeta<1$ be a stable solution of equation $(\ref{mp})_e$ for $\mu<\frac{p(p+3)}{p-1}$ and let $\psi\in C_c^{1}(\mathbb{R}^N)$. \\
\textbf{Step 1.}
Choosing $\varphi=e^{p\alpha u}\psi^p$ ($\alpha>0$ to be chosen later) in equation $(\ref{ws1})_e$.
Since, $$\nabla\varphi = p\alpha e^{p\alpha u}\psi^p\nabla u + pe^{p\alpha u}\psi^{p-1}\nabla\psi$$ 
we obtain
\begin{multline*}
p\alpha\int_{\mathbf{R}^N} w(x)|\nabla u|^p e^{p\alpha u} \psi^p \,dx 
+ p\int_{\mathbf{R}^N} w(x)|\nabla u|^{p-2} e^{p\alpha u}
 \psi^{p-1} <\nabla u, \nabla\psi> \,dx \\
= \int_{\mathbf{R}^N} e^{(p\alpha + 1) u} \psi^p \,dx.
\end{multline*}
Therefore using Young's inequality for any $\epsilon\in(0,p\alpha)$, we obtain
\begin{multline*}
p\alpha\int_{\mathbf{R}^N} w(x)|\nabla u|^p e^{p\alpha u} \psi^p \,dx\\ 
\leq p\int_{\mathbf{R}^N} w(x)|\nabla u|^{p-1} e^{p\alpha u} \psi^{p-1} |\nabla\psi| \,dx
  + \int_{\mathbf{R}^N} e^{(p\alpha + 1) u} \psi^p \,dx\\
\leq \int_{\mathbf{R}^N} \epsilon
 \Big(w^{\frac{p-1}{p}}|\nabla u|^{p-1} e^{(p-1)\alpha u} \psi^{p-1} \Big)^{\frac{p}{p-1}}\\
 + C_\epsilon\big(w^{\frac{1}{p}}e^{\alpha u} |\nabla\psi| \big)^p \,dx
 + \int_{\mathbf{R}^N} e^{(p\alpha + 1) u} \psi^p \,dx\\
= \epsilon\int_{\mathbf{R}^N} w(x)|\nabla u|^p e^{p\alpha u} \psi^p \,dx 
 + C_\epsilon\int_{\mathbf{R}^N} w(x)e^{p\alpha u} |\nabla\psi|^p \,dx \\
 + \int_{\mathbf{R}^N} e^{(p\alpha + 1) u} \psi^p \,dx
\end{multline*}
Therefore we get,
\begin{multline}\label{s1}
(p\alpha-\epsilon)\int_{\mathbf{R}^N} w(x)|\nabla u|^p e^{p\alpha u} \psi^p \,dx 
\leq  C_\epsilon\int_{\mathbf{R}^N} w(x)e^{p\alpha u} |\nabla\psi|^p \,dx \\
+ \int_{\mathbf{R}^N} e^{(p\alpha + 1) u} \psi^p \,dx.
\end{multline}
\textbf{Step 2.}
Choose $\varphi = e^{\frac{p\alpha u}{2}} \psi^{\frac{p}{2}}$. Therefore,
$$
\nabla\varphi = \frac{p\alpha}{2} e^{\frac{p\alpha u}{2}} \psi^{\frac{p}{2}} \nabla u 
+ \frac{p}{2}e^{\frac{p\alpha u}{2}} \psi^{\frac{p-2}{2}} \nabla\psi
$$
Putting $\varphi$ and $\nabla\varphi$ in the stability equation $(\ref{ss}_e)$, we obtain
\begin{align*}\label{s2.1}
\int_{\mathbf{R}^N} e^{(p\alpha + 1) u} \psi^p \,dx 
&\leq (p-1)\int_{\mathbf{R}^N} w(x)|\nabla u|^p \left(\frac{p\alpha}{2}\right)^2 
 e^{p\alpha u} \psi^p \,dx\\
&\quad + (p-1)\int_{\mathbf{R}^N} w(x)|\nabla u|^{p-1} \frac{p^2\alpha}{2} 
 e^{p\alpha u} \psi^{p-1} |\nabla\psi| \,dx\\
&\quad + (p-1)\int_{\mathbf{R}^N} w(x)|\nabla u|^{p-2} \left(\frac{p}{2}\right)^2 
 e^{p\alpha u} \psi^{p-2} |\nabla\psi|^2 \,dx.
\end{align*}
Using Young's inequality we estimate the last two terms
\begin{align*}
&(p-1)\int_{\mathbf{R}^N} w(x)|\nabla u|^{p-1} \frac{p^2\alpha}{2} 
 e^{p\alpha u} \psi^{p-1} |\nabla\psi| \,dx\\
&\leq \int_{\mathbf{R}^N} \frac{\epsilon}{2}
 \left(w^{\frac{p-1}{p}}|\nabla u|^{p-1} e^{(p-1)\alpha u} \psi^{p-1}\right)^{\frac{p}{p-1}} 
 + C_\epsilon\left(w^{\frac{1}{p}}e^{\alpha u}|\nabla\psi|\right)^p \,dx\\
&= \frac{\epsilon}{2}\int_{\mathbf{R}^N} w(x)|\nabla u|^p e^{p\alpha u}
 \psi^p \,dx + C_\epsilon\int_{\mathbf{R}^N} w(x)e^{p\alpha u} |\nabla\psi|^p \,dx,
\end{align*}
and
\begin{align*}
&(p-1)\int_{\mathbf{R}^N} w(x)|\nabla u|^{p-2} \left(\frac{p}{2}\right)^2 
 e^{p\alpha u} \psi^{p-2} |\nabla\psi|^2 \,dx\\
&\leq \int_{\mathbf{R}^N} \frac{\epsilon}{2}\left(w^{\frac{p-2}{p}}|\nabla u|^{p-2} 
 e^{(p-2)\alpha u} \psi^{p-2}\right)^{\frac{p}{p-2}} 
 + C_\epsilon\left(w^{\frac{2}{p}}e^{2\alpha u}|\nabla\psi|^2\right)^{\frac{p}{2}} \,dx\\
&= \frac{\epsilon}{2}\int_{\mathbf{R}^N} w(x)|\nabla u|^p e^{p\alpha u} \psi^p \,dx 
 + C_\epsilon\int_{\mathbf{R}^N} w(x)e^{p\alpha u} |\nabla\psi|^p \,dx.
\end{align*}
Using these two estimates in the previous one, we obtain after using (\ref{s1}) 
\begin{align*}
&\int_{\mathbf{R}^N} e^{(p\alpha + 1) u} \psi^p \,dx \\
&\leq \Big(\frac{(p-1)p^2\alpha^2}{4}+\epsilon\Big)
 \int_{\mathbf{R}^N} w(x)|\nabla u|^p e^{p\alpha u} \psi^p \,dx 
 + C_\epsilon\int_{\mathbf{R}^N} w(x)e^{p\alpha u} |\nabla\psi|^p \,dx\\
&\leq \Big(\frac{(p-1)p^2\alpha^2}{4}+\epsilon\Big)
 \frac{1}{p\alpha-\epsilon}\int_{\mathbf{R}^N} e^{(p\alpha + 1) u} \psi^p \,dx
 + C_\epsilon\int_{\mathbf{R}^N} w(x)e^{p\alpha u} |\nabla\psi|^p \,dx.
\end{align*}
Define $\gamma_\epsilon=1-\Big(\frac{(p-1)p^2\alpha^2}{4}+\epsilon\Big)
 \frac{1}{p\alpha-\epsilon}.$
Note that 
$\lim_{\epsilon\to 0}\gamma_\epsilon = 1 - \frac{\alpha p(p-1)}{4} > 0$ for $\alpha \in \left(0,\frac{4}{p(p-1)}\right)$.\\ Hence we can choose some $\epsilon\in(0, 1)$ depending on $p$ and $\alpha$ such that $\gamma_\epsilon > 0$.
Hence we get,
\begin{equation}\label{s2}
\int_{\mathbf{R}^N} e^{(p\alpha + 1) u} \psi^p \,dx 
\leq C\int_{\mathbf{R}^N} w(x)e^{p\alpha u} |\nabla\psi|^p \,dx.
\end{equation}
where $C$ is a constant depending on the chosen $\epsilon\in(0,1)$.
Next we choose some $m$ such that $p\alpha+1=m$ and apply (\ref{s2}) for $\psi=\eta^m$ to obtain 
\begin{align*}
\int_{\mathbf{R}^N} e^{(p\alpha + 1) u} \eta^{pm} \,dx 
&\leq C\int_{\mathbf{R}^N} w(x)e^{p\alpha u} \eta^{p(m-1)} |\nabla\eta|^p \,dx\\
&\leq \int_{\mathbf{R}^N} \epsilon 
 \Big(e^{p\alpha u} \eta^{p(m-1)}\Big)^{\frac{p\alpha+1}{p\alpha}} 
 + C_\epsilon (w|\nabla\eta|^p)^{p\alpha + 1} \,dx\\
&\leq \epsilon \int_{\mathbf{R}^N} e^{(p\alpha+1)u} \eta^{pm} \,dx 
 + C_\epsilon \int_{\mathbf{R}^N} w^{p\alpha+1}|\nabla\eta|^{p(p\alpha + 1)} \,dx.
\end{align*}
Choosing $\eta_R\in C_c^1(\mathbf{R}^N)$ satisfying 
$0\leq\eta_R\leq 1$ in $\mathbf{R}^N$, $\eta_R=1$ in $B_R(0)$ and 
$\eta_R=0$ in $\mathbf{R}^N \setminus B_{2R}(0)$ with $|\nabla\eta|\leq\frac{C}{R}$ for $C>1$ we obtain after using the assumption on $w$
\begin{equation}\label{con}
\int_{B_R(0)} e^{(p\alpha + 1) u} \,dx\leq C R^{-p(p\alpha + 1)}\int_{B_{2R}(0)}w^{p\alpha+1} dx
\end{equation}
Now since, $A_R=\mathcal{O}(R^{\mu})$ we have for sufficiently large $R$,
\begin{equation*}
\int_{B_R(0)} e^{(p\alpha + 1) u} \,dx\leq C R^{\big[\mu (\frac{p-1}{p+3})-p\big](p\alpha + 1)}dx
\end{equation*}
which tends to $0$ as $R\to\infty$. Hence arriving at a contradiction. 
\end{proof}

\begin{proof}[Proof of Theorem \ref{radial}]
Following the exact proof of Theorem \ref{nonexistence theorem} for $w(x)\leq C|x|^\theta$ we have from (\ref{con}),
\begin{align*}
\int_{B_R(0)}e^{p(p\alpha+1)u}dx
&\leq CR^{-p(p\alpha+1)}\int_{B_{2R}(0)}(w(x))^{p\alpha+1}r^{N-1}dr\\
&\leq CR^{-p(p\alpha+1)}\int^{2R}_0 r^{\theta(p\alpha+1)+N-1}dr\\
&\leq CR^{(\theta-p)(p\alpha+1)+N}
\end{align*}
which implies that $R^{(\theta-p)(p\alpha+1)+N}\to 0\;\;\mbox{as}\;\;R\to\infty$, given that 
$$
-\frac{N}{p\alpha+1}<\theta<p-\frac{N}{p\alpha+1}
$$
for any $\alpha\in (0,\frac{4}{p(p-1)})$. This proves the first part of the Theorem.\\
Note that when $N<\frac{p(p+3)}{p-1}$, there exist an $\alpha_0\in(0,\frac{4}{p(p-1)})$ such that $N-p(p\alpha_0+1)<0$, which implies $p-\frac{N}{p\alpha_0+1}>0$. \\Hence the interval $\mathcal{I}_{{\alpha}_0}:=(-\frac{N}{p\alpha_0+1},p-\frac{N}{p\alpha_0+1})$ contains both positive and negative values of $\theta$. This along with the fact that for any $\theta$ in $\mathcal{I}_{\alpha}$ and $\alpha\in (0,\frac{4}{p(p-1)})$ we have, $$R^{(\theta-p)(p\alpha+1)+N}\to 0\;\;\mbox{as}\;\;N\to\infty$$ shows that there are $\theta$, both positive and negative for which the problem $(\ref{mp})_e$ does not admit a stable solution.

Again when $N\geq\frac{p(p+3)}{p-1}$, we have $p-\frac{N(p-1)}{p+3}\leq 0$. Note that for any $\alpha\in(0,\frac{4}{p(p-1)})$, we have the strict inequality
$$
p-\frac{N}{p\alpha+1}<p-\frac{N(p-1)}{p+3}.
$$ 
Hence $\mathcal{I}_{\alpha}\subset (-\infty,0)$ when $N\geq\frac{p(p+3)}{p-1}$ and so all exponent $\theta$ must be negative for the non-existence to hold.

\end{proof}

\begin{proof}[Proof of Theorem \ref{Main}]\label{sec2}
Suppose $u\in C^{1,\zeta}(\mathbb{R}^N)$ be a positive stable solution of equation $(\ref{mp})_s$ for $f(x)=-x^{-\delta}$ and $\psi\in C_c^{1}(\mathbb{R}^N)$. \\
\textbf{Step 1.} Choosing $\varphi=u^{-\alpha}\psi^p$, ($\alpha>0$ to be chosen later) as a test function in the weak form $(\ref{ws1})_s$, since
$$
\nabla\varphi=-\alpha u^{-\alpha-1}\psi^p\nabla u+p\psi^{p-1}u^{-\alpha}\nabla\psi
$$
we obtain 
\begin{multline}\label{e5}
\alpha\int_{\mathbb{R}^N}w(x)u^{-\alpha-1}\psi^p|\nabla u|^p dx
\leq p\int_{\mathbb{R}^N}w(x)u^{-\alpha}\psi^{p-1}|\nabla u|^{p-1}|\nabla \psi|dx\\
+\int_{\mathbb{R}^N}u^{-\delta-\alpha}\psi^p dx
\end{multline}
Now using the Young's inequality for $\epsilon\in(0,\alpha)$, we obtain 
\begin{multline*}
p\int_{\mathbb{R}^N}w(x)u^{-\alpha}\psi^{p-1}|\nabla u|^{p-1}|\nabla \psi|dx\\
=p\int_{\mathbb{R}^N}(w^\frac{1}{p'}u^{\frac{-\alpha-1}{p'}}|\nabla u|^{p-1}\psi^{p-1})(w^\frac{1}{p}u^\frac{p-\alpha-1}{p}|\nabla \psi|)dx\\
\leq \{\epsilon\int_{\mathbb{R}^N}w(x)u^{-\alpha-1}|\nabla u|^p\psi^p dx\\+C_{\epsilon}\int_{\mathbb{R}^N}w(x)u^{p-\alpha-1}|\nabla\psi|^p\}dx
\end{multline*}
Plugging this estimate in $(\ref{e5})$, we get 
\begin{multline}\label{e7}
(\alpha-\epsilon)\int_{\mathbb{R}^N}w(x)u^{-\alpha-1}|\nabla u|^p\psi^p dx\leq C_{\epsilon}\int_{\mathbb{R}^N}w(x)u^{p-\alpha-1}|\nabla \psi|^p dx\\
+\int_{\mathbb{R}^N}u^{-\delta-\alpha}\psi^p dx
\end{multline}
\textbf{Step 2.} Choosing $\varphi=u^{-\beta-\frac{p}{2}+1}\psi^{\frac{p}{2}}$, ($\beta>0$ to be chosen later) as a test function in the stability equation $(\ref{ss})_s$, since
$$
\nabla\varphi=(-\beta-\frac{p}{2}+1)u^{-\beta-\frac{p}{2}}\psi^{\frac{p}{2}}\nabla u+\frac{p}{2}\psi^{\frac{p-2}{2}}u^{-\beta-\frac{p}{2}+1}\nabla\psi
$$
we obtain
\begin{multline*}
\delta\int_{\mathbb{R}^N}u^{-2\beta-\delta-p+1}\psi^p dx
\leq (p-1)(-\beta-\frac{p}{2}+1)^2\int_{\mathbb{R}^N} w(x)u^{-2\beta-p}|\nabla u|^p\psi^p dx\\
+(p-1)\frac{p^2}{4}\int_{\mathbb{R}^N}w(x)u^{-2\beta-p+2}\psi^{p-2}|\nabla u|^{p-2}|\nabla\psi|^2 dx\\+
p(p-1)(-\beta-\frac{p}{2}+1)\int_{\mathbb{R}^N}w(x)u^{-2\beta-p+1}\psi^{p-1}|\nabla u|^{p-1}|\nabla\psi|dx
=A+B+C
\end{multline*}
where 
$$
A=(p-1)(-\beta-\frac{p}{2}+1)^2\int_{\mathbb{R}^N} w(x)u^{-2\beta-p}|\nabla u|^p\psi^p dx
$$
$$
B=(p-1)\frac{p^2}{4}\int_{\mathbb{R}^N}w(x)u^{-2\beta-p+2}\psi^{p-2}|\nabla u|^{p-2}|\nabla\psi|^2 dx
$$
and 
$$
C=p(p-1)(-\beta-\frac{p}{2}+1)\int_{\mathbb{R}^N}w(x)u^{-2\beta-p+1}\psi^{p-1}|\nabla u|^{p-1}|\nabla\psi|dx.
$$
Therefore we have 
\begin{equation}\label{e8}
\delta\int_{\mathbb{R}^N}u^{-2\beta-\delta-p+1}\psi^p dx\leq A+B+C
\end{equation}
Now, using the exponents $\frac{p}{p-2}$ and $\frac{p}{2}$ in the Young's inequality, we have
\begin{align*}
B&=(p-1)\frac{p^2}{4}\int_{\mathbb{R}^N}w(x)u^{-2\beta-p+2}\psi^{p-2}|\nabla u|^{p-2}|\nabla\psi|^2 dx\\
&=(p-1)\frac{p^2}{4}\int_{\mathbb{R}^N}(w^\frac{p-2}{p}|\nabla u|^{p-2}\psi^{p-2}u^\frac{(-2\beta-p)(p-2)}{p})(w^\frac{2}{p}u^\frac{2(p-2\beta-p)}{p}|\nabla \psi|^2)dx\\
&\leq\{\frac{\epsilon}{2}\int_{\mathbb{R}^N}w(x)|\nabla u|^p\psi^pu^{-2\beta-p}dx+C_{\epsilon}\int_{\mathbb{R}^N}w(x)u^{-2\beta}|\nabla \psi|^p dx\}
\end{align*}
Also using the exponents $p'=\frac{p}{p-1}$ and $p$ in the Young's inequality, we obtain
\begin{align*}
C&=p(p-1)(-\beta-\frac{p}{2}+1)\int_{\mathbb{R}^N}w(x)u^{-2\beta-p+1}\psi^{p-1}|\nabla u|^{p-1}|\nabla\psi|dx\\
&=p(p-1)(-\beta-\frac{p}{2}+1)\int_{\mathbb{R}^N}(w^\frac{1}{p'}|\nabla u|^{p-1}\psi^{p-1}u^\frac{-2\beta-p}{p'})(w^\frac{1}{p}u^\frac{p-2\beta-p}{p}|\nabla \psi|)dx\\
&\leq\{\frac{\epsilon}{2}\int_{\mathbb{R}^N}w(x)|\nabla u|^p\psi^pu^{-2\beta-p}dx+C_{\epsilon}\int_{\mathbb{R}^N}w(x)u^{-2\beta}|\nabla \psi|^p dx\}
\end{align*}
Choosing $\alpha=2\beta+p-1>0$ in equation (\ref{e7}) we get 
\begin{multline}\label{e9}
\int_{\mathbb{R}^N}w(x)u^{-2\beta-p}|\nabla u|^p\psi^p dx\leq\frac{1}{2\beta+p-\epsilon-1}\\
\{C_\epsilon\int_{\mathbb{R}^N}w(x)u^{-2\beta}|\nabla \psi|^p dx+\int_{\mathbb{R}^N}u^{-2\beta-\delta-p+1}\psi^p dx\}
\end{multline}
Using the inequality $(\ref{e9})$ and the above estimates on $B$ and $C$ in $(\ref{e8})$, we get
\begin{align*}
\delta\int_{\mathbb{R}^N}u^{-2\beta-\delta-p+1}\psi^p dx
\leq\frac{(p-1)(-\beta-\frac{p}{2}+1)^2+\epsilon}{(2\beta+p-\epsilon-1)}\int_{\mathbb{R}^N}u^{-2\beta-\delta-p+1}\psi^p dx+\\C_\epsilon (\frac{(p-1)(-\beta-\frac{p}{2}+1)^2+\epsilon}{(2\beta+p-\epsilon-1)}+2)\int_{\mathbb{R}^N} w(x)u^{-2\beta}|\nabla \psi|^p dx.
\end{align*}
We define $\eta_{\epsilon}=\delta-\frac{(p-1)(-\beta-\frac{p}{2}+1)^2+\epsilon}{(2\beta+p-\epsilon-1)}$\\
Therefore, we have $\lim\limits_{\epsilon\to 0}\eta_{\epsilon}=\delta-\frac{(p-1)(-\beta-\frac{p}{2}+1)^2}{(2\beta+p-1)}>0$ for every $\beta\in(0,l)$.\\
Therefore we can choose an $\epsilon\in(0,1)$ depending on $p$ and $\beta$ such that $\eta_{\epsilon}>0$.
Hence we get
\begin{equation}\label{e10}
\int_{\mathbb{R}^N}u^{-2\beta-\delta-p+1}\psi^p dx\leq C\int_{\mathbb{R}^N}w(x)u^{-2\beta}|\nabla \psi|^p dx.
\end{equation}
Replacing $\psi$ by $\psi^\frac{2\beta+\delta+p-1}{p}$ we get
\begin{align*}
\int_{\mathbb{R}^N}(\frac{\psi}{u})^{2\beta+\delta+p-1}dx
&\leq C(\frac{2\beta+\delta+p-1}{p})^p\int_{\mathbb{R}^N}w(x)u^{-2\beta}\psi^{2\beta+\delta-1}|\nabla\psi|^p dx\\
&=C\int_{\mathbb{R}^N}(\frac{\psi}{u})^{2\beta}(w(x)\psi^{\delta-1}|\nabla\psi|^p) dx
\end{align*}
Choosing the exponents $\gamma=\frac{2\beta+\delta+p-1}{2\beta}$ and ${\gamma}'=\frac{\gamma}{\gamma-1}=\frac{2\beta+\delta+p-1}{\delta+p-1}$ in the Young's inequality we get,
\begin{align*}
\int_{\mathbb{R}^N}(\frac{\psi}{u})^{2\beta+\delta+p-1}dx
&\leq\{\epsilon\int_{\mathbb{R}^N}(\frac{\psi}{u})^{2\beta+\delta+p-1}dx+C_{\epsilon}\int_{\mathbb{R}^N}w^{\gamma'}\psi^{(\delta-1){\gamma}'}|\nabla\psi|^{p{\gamma}'}dx\}
\end{align*}
Therefore we get the inequality
\begin{equation}\label{Cac}
\int_{\mathbb{R}^N}(\frac{\psi}{u})^{2\beta+\delta+p-1}dx
\leq C\int_{\mathbb{R}^N}w^\frac{2\beta+\delta+p-1}{\delta+p-1}(\psi^{\frac{\delta-1}{p}}|\nabla\psi|)^{\frac{p(2\beta+\delta+p-1)}{\delta+p-1}}dx.
\end{equation}

Choose $\psi_R\in{C}_{c}^{1}(\mathbb{R}^N)$ satisfying $0\leq\psi_R\leq {1}$ in $\mathbb{R}^N$, $\psi_R=1$ in $B_R(0)$ and $\psi_R=0$ in $\mathbb{R}^N\setminus B_{2R}(0)$ with $|\nabla\psi_R|\leq\frac{C}{R}$ for $C>1$,
\begin{equation}\label{Final}
\int_{B_R(0)}(\frac{1}{u})^{2\beta+\delta+p-1}dx
\leq C{R}^{-\frac{p(2\beta+\delta+p-1)}{\delta+p-1}}\int_{B_{2R}(0)}w^\frac{2\beta+\delta+p-1}{\delta+p-1}dx.
\end{equation}
Since $Q_R=\mathcal O(R^\tau)$, letting $R\to\infty$ we have
\begin{equation}\nonumber
\int_{B_R(0)}(\frac{1}{u})^{2\beta+\delta+p-1}dx \leq C R^{\tau (\frac{\delta+p-1+2\beta}{\delta+p-1+2l})-p(\frac{\delta+p-1+2\beta}{\delta+p-1})}
\end{equation}
Now since, $\tau<pk$ we have, $$R^{\tau (\frac{\delta+p-1+2\beta}{\delta+p-1+2l})-p(\frac{\delta+p-1+2\beta}{\delta+p-1})}\to 0$$ as $R\to\infty$, where $l:=\frac{2\delta}{p-1}-\frac{p-1}{2}$.

This implies $\displaystyle\int_{\mathbb{R}^N}(\frac{1}{u})^{2\beta+\delta+p-1}dx=0$ and so we arrive at a contradiction. 
\end{proof}

\begin{proof}[Proof of Theorem \ref{rad}]
Following the proof of Theorem \ref{Main} for $w(x)\leq C|x|^\theta$ we have from (\ref{Final}),
\begin{align}
\displaystyle\int_{B_R(0)}(\frac{1}{u})^{2\beta+\delta+p-1}dx&\leq C{R}^{-\frac{p(2\beta+\delta+p-1)}{\delta+p-1}}\displaystyle\int_{B_{2R}(0)}w^\frac{2\beta+\delta+p-1}{\delta+p-1}dx\nonumber\\
&\leq C{R}^{-\frac{p(2\beta+\delta+p-1)}{\delta+p-1}} \displaystyle\int_{B_{2R}(0)}r^{\frac{\theta(2\beta+\delta+p-1)}{\delta+p-1}}r^{N-1}dr\nonumber\\
&=C R^{\frac{(\theta-p)(2\beta+\delta+p-1)}{\delta+p-1}+N}\label{imp}
\end{align}
Now given, $\theta\in (-\frac{N(\delta+p-1)}{2\beta+\delta+p-1},p-\frac{N(\delta+p-1)}{2\beta+\delta+p-1})$ one has, $\frac{(\theta-p)(2\beta+\delta+p-1)}{\delta+p-1}+N<0$. Hence  we have from (\ref{imp}) that $\displaystyle\int_{\mathbb{R}^N}(\frac{1}{u})^{2\beta+\delta+p-1}dx=0$ which is a contradiction to the fact that $u$ is a stable solution.

When $N\geq pk$ then one has, $$p-\frac{N(\delta+p-1)}{2\beta+\delta+p-1}<p-\frac{N(\delta+p-1)}{2l+\delta+p-1}\leq 0\;\;\mbox{for all}\;\;\beta\in\mathcal{B}$$ where 
$l:=\frac{2\delta}{p-1}-\frac{p-1}{2}$.\\

Therefore one has $\mathcal{K}_\beta\subset(-\infty,0)$ for all $\beta\in\mathcal{B}$.\\
For $N<pk$, there exist $\beta_{0}\in(0,l)$ such that $N-{\frac{p(2\beta_{0}+\delta+p-1)}{\delta+p-1}}<0$ 
which implies,
$$
p-\frac{N(\delta+p-1)}{2\beta+\delta+p-1}>0.
$$
Hence $0\in\mathcal{K}_{\beta_{0}}.$
\end{proof}

\section*{Acknowledgement:}
The first author and second author are supported by Inspire Faculty Award DST-MATH 2013-029
 and by NBHM Fellowship No: 2-39(2)-2014 (NBHM-RD-II-8020-June 26, 2014) respectively.

Kaushik Bal\\
E-mail: kaushik@iitk.ac.in\\
Department of Mathematics and Statistics.\\
Indian Institute of Technology, Kanpur.\\
UP-208016, India.\\

Prashanta Garain\\
E-mail: pgarain@iitk.ac.in\\
Department of Mathematics and Statistics.\\
Indian Institute of Technology, Kanpur.\\
UP-208016, India.
\end{document}